\gdef\private#1{#1} \gdef\public#1{}  
\newtheorem{thm}{Theorem}
\newtheorem{lem}[thm]{Lemma}
\newtheorem{prop}[thm]{Proposition}
\newtheorem*{claim}{Claim}
\theoremstyle{definition}
\newtheorem{defn}{Definition}
\theoremstyle{remark}
\newtheorem{rem}[thm]{Remark}
\newcommand*\fancyrefthmlabelprefix{thm}\frefformat{plain}{\fancyrefthmlabelprefix}{Theorem~#1}
\newcommand*\fancyreflemlabelprefix{lem}\frefformat{plain}{\fancyreflemlabelprefix}{Lemma~#1}
\newcommand*\fancyrefproplabelprefix{prop}\frefformat{plain}{\fancyrefproplabelprefix}{Proposition~#1}
\newcommand*\fancyrefcorlabelprefix{cor}\frefformat{plain}{\fancyrefcorlabelprefix}{Corollary~#1}
\newcommand*\fancyrefclaimlabelprefix{claim}\frefformat{plain}{\fancyrefclaimlabelprefix}{Claim~#1}
\newcommand*\fancyreffactlabelprefix{fact}\frefformat{plain}{\fancyreffactlabelprefix}{Fact~#1}
\newcommand*\fancyrefquestionlabelprefix{question}\frefformat{plain}{\fancyrefquestionlabelprefix}{Question~#1}
\newcommand*\fancyrefconjlabelprefix{conj}\frefformat{plain}{\fancyrefconjlabelprefix}{Conjecture~#1}
\newcommand*\fancyrefdefnlabelprefix{defn}\frefformat{plain}{\fancyrefdefnlabelprefix}{Definition~#1}
\newcommand*\fancyrefconstlabelprefix{const}\frefformat{plain}{\fancyrefconstlabelprefix}{Construction~#1}
\newcommand*\fancyrefsetuplabelprefix{setup}\frefformat{plain}{\fancyrefsetuplabelprefix}{Setup~#1}
\newcommand*\fancyrefexlabelprefix{ex}\frefformat{plain}{\fancyrefexlabelprefix}{Example~#1}
\newcommand*\fancyrefremlabelprefix{rem}\frefformat{plain}{\fancyrefremlabelprefix}{Remark~#1}
\newcommand*\fancyrefitemlabelprefix{item}\frefformat{plain}{\fancyrefitemlabelprefix}{(#1)}
\def\repeat#1#2 {\expandafter\gdef\csname B#1\endcsname {\mathbb{#1}}
  \ifthenelse{\equal{#2}{*}}{}{\repeat #2 }}
\def\repeat#1#2 {\expandafter\gdef\csname C#1\endcsname {\mathcal{#1}}
  \ifthenelse{\equal{#2}{*}}{}{\repeat #2 }}
\let\isom\cong
\let\normal\triangleleft
\let\phi\varphi
\DeclareMathOperator{\Sym}{Sym}
\DeclareMathOperator{\Alt}{Alt}
\DeclareMathOperator{\Comp}{Comp}
\DeclareMathOperator{\Soc}{Soc}
\DeclareMathOperator{\Pker}{P\kern-.3pt ker}
\newcommand{\xxx}[2][] {%
  \private{\ifthenelse{\equal{#1}{}}
    {\underline{$\bullet$}}{\uline{#1}}\marginpar{\tiny #2}}%
  \public{#1}\xspace}
\begin{document}
\title[A CFSG-free Analysis]{A CFSG-free analysis of Babai's quasipolynomial GI algorithm}
\author{L\'aszl\'o Pyber}
\thanks{The author was supported by the National Research, Development and Innovation Office (NKFIH) Grants K115799, ERC\_HU\_15 118286.}
\maketitle

Babai's analysis of his algorithm \cite{Ba1}
(corrected in \cite{Ba2})
uses
the Classification of Finite Simple Groups (CFSG) in Section 8.
In particular, he uses Schreier's Hypothesis
(a well-known consequence of CFSG),
which states that the outer automorphism groups of finite simple
groups are solvable.
We will make Babai's bound worse, but still quasipolynomial, and
remove CFSG entirely from the analysis.
Our short proof relies on a little-known result of Wielandt
\cite{Hig}.

More precisely, we give a CFSG-free proof of a weaker version of
the key group-theoretic result of Babai,
Lemma 8.5 from \cite{Ba1}, see \fref{lem:key}.
It is acknowledged in \cite{Ba3} that our result is sufficient to make
Babai's work CFSG-free.

\vskip 5pt
All groups in this note are finite. Throughout $\log$ means logarithm
to the base 2.
We use the following notation:
\begin{itemize}
\item
  $|G|$ -the order of a group $G$;
\item
  $\Sym(\Omega)$ -the symmetric group on the set $\Omega$; 
\item
  $\Alt(n)$ -the alternating group of degree $n$;
\item
  For $G < \Sym(\Omega)$, $G_x$ denotes the pointstabiliser of a point
  $x\in\Omega$ ($G$ will always be transitive and  all pointstabilisers
  equivalent as permutation groups);
\item
  If $G < \Sym(\Omega)$, and 
  $\Delta$ is a subset of $\Omega$ fixed by $G$,
  then the image of the action of $G$ on $\Delta$ is called the \emph{permutation group induced on} $\Delta$ by $G$, and written $G^\Delta$;

\item
  $\Comp_A(G)$ -the set of isomorphism types of abelian composition factors
  of a group $G$.
  Note that we have $\big|\Comp_A(G)\big|\le\log|G|$.
\end{itemize}

For notation and definitions unexplained here and below see
the monograph of Dixon and Mortimer \cite{DM}.

We will first prove a useful result for groups of odd order
(without using even the Feit-Thompson theorem).
We call the orbits of a pointstabiliser in a transitive group
\emph{suborbits}.
We need the following basic lemma of Jordan (see \cite[18.2]{Wie}):

\begin{lem} \label{lem:Jordan}
  Let $G$ be a primitive group of degree $n$
  and $G_x$ a pointstabiliser.
  If a prime $p$ divides $|G_x|$ then $p$ divides
  $\big|{G_x}^\Delta\big|$
  for all suborbits $\Delta$ of length at least $2$.
\end{lem}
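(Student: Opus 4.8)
The plan is to derive a contradiction from the strong connectivity of a suitable orbital digraph; this is, in essence, Wielandt's proof of Jordan's lemma. Assume $p\mid|G_x|$, and suppose that some suborbit $\Delta$ with $|\Delta|\ge2$ satisfies $p\nmid\big|{G_x}^\Delta\big|$. Choose a Sylow $p$-subgroup $P$ of $G_x$; then $P\ne1$, since $p\mid|G_x|$. The kernel of the action of $G_x$ on $\Delta$ has index $\big|{G_x}^\Delta\big|$, which is prime to $p$, so this kernel contains every Sylow $p$-subgroup of $G_x$; in particular $P$ fixes $\Delta$ pointwise. The goal is then to show that $P$ fixes \emph{every} point of $\Omega$, contradicting $P\ne1$.

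To propagate fixed points, fix $y\in\Delta$ and form the orbital digraph $\Gamma$ on vertex set $\Omega$, with an arc $u\to v$ whenever $g(x)=u$ and $g(y)=v$ for some $g\in G$. Then $G$ acts on $\Gamma$ preserving arcs, and this action is vertex-transitive; every vertex has out-degree $|\Delta|\ge2$; and $\Gamma$ has no loops, since $\Delta$, being a $G_x$-orbit of length at least $2$, is disjoint from the $G_x$-orbit $\{x\}$, so $y\ne x$. Let $F$ be the set of points of $\Omega$ fixed by $P$; then $x\in F$, and by the previous paragraph $\Delta\subseteq F$. The crucial point is that $F$ is closed under passing to out-neighbours in $\Gamma$. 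Indeed, let $u\in F$ and pick $g\in G$ with $g(x)=u$; then $G_u=gG_xg^{-1}$, and one checks that the set of out-neighbours of $u$ is exactly $g(\Delta)$. Since $u\in F$ we have $P\le G_u$, and $|P|$ equals the $p$-part of $|G_u|=|G_x|$, so $P$ is a Sylow $p$-subgroup of $G_u$. Conjugation by $g$ gives a permutation isomorphism from the action of $G_x$ on $\Delta$ to the action of $G_u$ on $g(\Delta)$, so the latter group has order $\big|{G_x}^\Delta\big|$, again prime to $p$; hence $P$ lies in the kernel of the action of $G_u$ on $g(\Delta)$ and fixes $g(\Delta)$ pointwise. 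Thus all out-neighbours of $u$ lie in $F$.

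It remains to verify that, for primitive $G$, the digraph $\Gamma$ is strongly connected: then every vertex is reachable from $x$ along directed arcs, so the closure property forces $F=\Omega$, hence $P=1$, the desired contradiction. The strongly connected components of $\Gamma$ are permuted by $G$, so they form a $G$-invariant partition of $\Omega$; by primitivity this partition is trivial, so either every component is a singleton or there is only one. The first alternative is impossible: a finite loopless digraph all of whose strongly connected components are singletons is acyclic, hence has a vertex of out-degree $0$, whereas in $\Gamma$ every out-degree equals $|\Delta|\ge2$. So $\Gamma$ is strongly connected, and the proof is complete.

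The step I expect to need the most care is the passage from ``$F$ is closed under out-neighbours'' to ``$F=\Omega$'': the fixed points of $P$ propagate only \emph{forward} along the arcs of $\Gamma$, so one genuinely needs strong connectivity, not merely the familiar fact that orbital graphs of primitive groups are connected — which is why the argument runs through strongly connected components. The remaining ingredients, namely Sylow's theorem and the bookkeeping with the permutation isomorphism induced by $g$, are routine.
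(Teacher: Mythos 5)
Your proof is correct. Note that the paper does not prove this lemma at all: it is quoted as Jordan's lemma with a reference to Wielandt's book (Theorem 18.2), so there is no in-paper argument to compare against; your argument is essentially the classical one. The individual steps all check out: since $|G_x:\ker(G_x\to\Sym(\Delta))|=\big|{G_x}^\Delta\big|$ is prime to $p$, the kernel contains every Sylow $p$-subgroup of $G_x$; the out-neighbours of $u=g(x)$ in the orbital digraph are exactly $g(\Delta)$, and $P$ is Sylow in $G_u=gG_xg^{-1}$ because point stabilisers are conjugate, so fixed points of $P$ do propagate forward along arcs; and your derivation of strong connectivity (strongly connected components form a $G$-invariant partition, primitivity forces it to be trivial, and the all-singletons alternative is excluded because a loopless finite digraph with every out-degree $\ge 2$ cannot be acyclic) is a clean, CFSG-free way to finish, correctly avoiding the trap of using only undirected connectivity of the orbital graph.
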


Recall that the \emph{exponent} of a group is the smallest common
multiple of the orders of its elements.

\begin{lem}\label{lem:nulladik}
  Let $G$ be a transitive group of degree $n$.
  If $G$ has odd order then the exponent of $G$ is at most $n^{(\log n)^2}$.
\end{lem}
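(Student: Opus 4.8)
The plan is to bound the exponent in two stages; write $\omega(m)$ for the number of distinct primes dividing a positive integer $m$. First, for \emph{any} transitive group $G\le\Sym(\Omega)$ of degree $n$ I would show that $\exp(G)\le n^{\omega(|G|)}$: the exponent factors as $\exp(G)=\prod_p p^{e_p}$, where $p^{e_p}$ is the largest order of a $p$-element of $G$, and such a $p$-element permutes $\Omega$ with all cycle lengths dividing $p^{e_p}$ and with least common multiple $p^{e_p}$, so one of its cycles has length exactly $p^{e_p}$, forcing $p^{e_p}\le n$. Hence it suffices to prove that $\omega(|G|)\le(\log n)^2$ for every transitive group $G$ of odd order and degree $n$ (this then gives $\exp(G)\le n^{(\log n)^2}$), and I would prove this by induction on $n$.

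If $G$ is imprimitive, fix a nontrivial block system with blocks of size $b$ and $c=n/b$ blocks, so $1<b,c<n$. Then $G$ embeds in the wreath product $H\wr Q$, where $H$ is the transitive group induced on a block by its setwise stabiliser and $Q$ is the transitive group induced by $G$ on the set of blocks; both $H$ and $Q$ are sections of $G$, hence of odd order, of degrees $b$ and $c$. So $\omega(|G|)\le\omega(|H|)+\omega(|Q|)\le(\log b)^2+(\log c)^2\le(\log b+\log c)^2=(\log n)^2$ by the inductive hypothesis. If $G$ is primitive and regular, then $G$ is cyclic of prime order and $\omega(|G|)=1\le(\log n)^2$.

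The main case is $G$ primitive and non-regular. Then $G_x\ne1$, and, $G$ being primitive of degree $\ge2$, the stabiliser $G_x$ fixes only $x$ (otherwise a second fixed point of $G_x$ would force $G_x$ to be normal in $G$, contradicting non-regularity), so every suborbit other than $\{x\}$ has length $\ge2$. Since $|G|$ is odd, $G$ is not $2$-transitive — a $2$-transitive group of degree $n$ has order divisible by the even number $n(n-1)$ — so $G_x$ has at least two orbits on $\Omega\setminus\{x\}$, and the smallest, say $\Delta$, has length $d=|\Delta|$ with $2\le d\le(n-1)/2$. As $d\ge2$, \fref{lem:Jordan} applies and shows that every prime dividing $|G_x|$ divides $|G_x^{\Delta}|$, so $\omega(|G_x|)\le\omega(|G_x^{\Delta}|)$; and $G_x^{\Delta}$ is transitive of odd order and degree $d<n$, so the inductive hypothesis gives $\omega(|G_x^{\Delta}|)\le(\log d)^2\le(\log n-1)^2$ (using $d\le(n-1)/2$). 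Combining with $\omega(n)\le\log n$ (since $n\ge2^{\omega(n)}$) and $|G|=n\,|G_x|$, we obtain $\omega(|G|)\le\log n+(\log n-1)^2=(\log n)^2-\log n+1\le(\log n)^2$, which closes the induction; the base cases are the trivial group and the regular primitive groups, where $\omega=1$.

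I expect the only real content to be in this last case, in two points. First, \fref{lem:Jordan} lets one bound $\omega(|G_x|)$ by the contribution of a \emph{single} suborbit; the crude estimate $\omega(|G_x|)\le\sum_i\omega(|G_x^{\Delta_i}|)$ summed over all suborbits would be far too weak when $G_x$ has many orbits. Second, oddness forces the minimal subdegree below $n/2$, which is exactly the slack the final inequality needs. The reduction to primitive groups via block systems, the wreath-product estimate, and the base cases are routine.
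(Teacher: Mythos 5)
Your proof is correct and takes essentially the same route as the paper: reduce the exponent bound to bounding the number (in the paper, the product $\pi(G)\le n^{\log n}$) of distinct primes dividing $|G|$, then induct on $n$, handling the primitive case exactly as the paper does via Jordan's lemma together with the observation that an odd-order group cannot be doubly transitive, so the minimal subdegree is less than $n/2$. The only cosmetic differences are that you treat the imprimitive case by embedding $G$ in a wreath product over an arbitrary block system where the paper uses the kernel of the action on a minimal block system, and that you track $\omega(|G|)$ directly rather than $\pi(G)$.
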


\begin{proof}
  We will actually prove the following claim:
  \begin{claim}
    The product $\pi(G)$ of all different primes
    dividing $|G|$ is at most $n^{\log n}$.
  \end{claim}

  Since any element of $G$ of prime power order has order at most $n$,
  the claim implies our statement.

  Assume first that $G$ is primitive.
  Since $G$ has odd order, $G$ cannot be doubly transitive.
  The smallest non-trivial suborbit $\Delta$ has size $<\frac n2$.
  \fref{lem:Jordan} implies that
  $\pi(G_x)=\pi\big({G_x}^\Delta\big)$.
  It is clear that
  $\pi(G)$ divides $n\cdot\pi(G_x)$,
  hence by induction
  $\pi(G)\le n\cdot\left(\frac n2\right)^{\log\frac n2}\le n^{\log n}$.

  Assume now that $G$ is imprimitive.
  Let $B_1$ be a minimal non-trivial block of imprimitivity (of size $b$)
  and $B_1,\dots,B_k$ the corresponding system of blocks of
  imprimitivity.
  Then $G$ permutes these blocks transitively, let $K$ denote the
  kernel of this action.
  By induction $\pi\big(G/K\big)\le k^{\log k}$.

  We may assume that $K\neq1$.
  Then the minimality of $B_1$ implies that
  $K$ acts transitively on each $B_i$,
  and the corresponding transitive groups are permutation equivalent.
  Hence $\pi(K)=\pi\big(K^{B_1}\big)\le b^{\log b}$.

  Using $kb=n$ we obtain that $\pi(G)\le n^{\log n}$.
  This completes the proof.
\end{proof}

We now prove an easy result which
implies that to obtain a polylogarithmic
bound for $\big|\Comp_A(G)\big|$
in terms of the degree of the transitive group $G$ it is sufficient to
obtain such a bound  for $\big|\Comp_A(G_x)\big|$
(where $G_x$ is a pointstabiliser).

\begin{prop}\label{prop:3}
  Let $G < \Sym(\Omega)$ be a transitive group of degree $n$ and $G_x$
  a pointstabiliser. Then
  $$
  \big|\Comp_A(G) \setminus \Comp_A(G_x)\big| <\log n.
  $$
\end{prop}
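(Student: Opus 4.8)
The proposition compares abelian composition factors of $G$ with those of $G_x$, where $[G:G_x]=n$. The natural idea is to interpolate between $G_x$ and $G$ along the chain given by the action on $\Omega$. Concretely, I would use the coset action: $G$ acts transitively on the $n$ cosets of $G_x$, and I want to control how many "new" abelian composition factors appear in $G$ that are not already present in $G_x$.

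\medskip

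The plan is to proceed by induction on $n$. If $n=1$ there is nothing to prove, so assume $n>1$ and pick a point $y\neq x$ in $\Omega$. Consider the subgroup $H = \langle G_x, G_y\rangle$. I will split into two cases according to whether $H = G$ or $H$ is a proper subgroup. If $H$ is proper, then $H$ is a transitive group on some orbit... wait, $H$ need not be transitive. Let me instead take the standard approach: order the points and build a subnormal-type chain. Actually the cleanest route is the following. Since $G$ is transitive of degree $n$, there is a subgroup chain
$$
G_x = H_0 < H_1 < \cdots < H_m = G
$$
where each $H_{i+1}$ is generated by $H_i$ together with one extra stabiliser $G_{y_i}$, and $[H_{i+1}:H_i]\ge 2$; such a chain exists with $m\le\log n$ because each step at least doubles the index — more precisely $|H_{i+1}|/|H_i|\ge 2$ since the index is an integer $>1$, and $|G|/|G_x| = n$, so $m\le\log n$. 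For each step, the key claim is that $\Comp_A(H_{i+1})\setminus\Comp_A(H_i)$ has size at most ... hmm, that's not obviously $<1$.

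\medskip

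Let me reconsider. The right statement is almost certainly proved via: if $K\le L$ with $[L:K]=d$, then $L$ has a composition factor "coming from the coset action on $d$ points," and the abelian composition factors of $L$ not among those of $K$ inject into the composition factors of a subquotient of $\Sym(d)$ — no. The actual mechanism: take $N = \operatorname{core}_L(K)$, the normal core. Then $L/N$ embeds in $\Sym(d)$, so $L/N$ is a transitive subgroup of $\Sym(d)$, and $\Comp_A(L) = \Comp_A(N)\cup\Comp_A(L/N)$. Now $N\le K$, so $\Comp_A(N)\subseteq\Comp_A(K)$. Hence $\Comp_A(L)\setminus\Comp_A(K)\subseteq\Comp_A(L/N)$, and $L/N$ is transitive of degree $d$. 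This reduces everything to bounding $|\Comp_A(T)|$ for $T$ a transitive group of degree $d$ — but that's the hard problem the paper is really about, so this can't be the intended elementary argument for Proposition~\ref{prop:3}.

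\medskip

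\textbf{The argument I would actually write.} Induct on $n$. If $G$ is primitive, let $N=\operatorname{core}_G(G_x)=1$, so $G$ embeds in nothing helpful directly; instead use a block system. If $G$ is imprimitive with a minimal block $B$ of size $b$, $1<b<n$, and block kernel $K$: then $G/K$ is transitive of degree $k=n/b<n$, and $K^B$ is the (transitive, by minimality) group induced on $B$, with pointstabiliser $(K\cap G_x)^B = (K_x)^B$. We have $\Comp_A(G) = \Comp_A(K)\cup\Comp_A(G/K)$. For $G/K$: its pointstabiliser is $G_x K/K$, and by induction $|\Comp_A(G/K)\setminus\Comp_A(G_xK/K)|<\log k$; since $\Comp_A(G_xK/K)\subseteq\Comp_A(G_x)$, we get $|\Comp_A(G/K)\setminus\Comp_A(G_x)|<\log k$. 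For $K$: since $K\trianglelefteq G$ and $G$ transitive on blocks, $K$ acts the same way on each block, so $\Comp_A(K)=\Comp_A(K^B)$; the group $K^B$ is transitive of degree $b$ with pointstabiliser $(K_x)^B$ which is a quotient of $K_x\le G_x$, so $\Comp_A((K_x)^B)\subseteq\Comp_A(G_x)$, and by induction $|\Comp_A(K^B)\setminus\Comp_A(G_x)|\le|\Comp_A(K^B)\setminus\Comp_A((K_x)^B)|<\log b$. Adding, $|\Comp_A(G)\setminus\Comp_A(G_x)|<\log k+\log b=\log n$. The primitive base case: if $G$ is primitive and $G_x=1$ then $|G|=n$ is... no, primitive doesn't force $G_x$ small.

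\medskip

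So the genuine remaining obstacle is the \textbf{primitive case}, and for that the trick must be: a primitive group of degree $n$ has a pointstabiliser $G_x$ which is maximal, so $G_x<G$ with no intermediate subgroup; then $\operatorname{core}_G(G_x)=1$ means $G$ embeds in $\Sym(n)$ with a maximal point stabiliser, hence $G$ is primitive — circular. The resolution: if $G$ is primitive and $G_x\ne 1$, then by the O'Nan–Scott-free observation, or simply: take a minimal normal subgroup $M$ of $G$; $M$ is transitive (as $G$ is primitive), so $[M:M_x]=n$ and $\Comp_A(M)\subseteq\Comp_A(G)$. If $M$ is elementary abelian then $|M|=n$ (regular), $G_x$ acts on $M$, $\Comp_A(G)=\Comp_A(G_x)\cup\{Z_p\}$ where $n=p^d$, giving $|\Comp_A(G)\setminus\Comp_A(G_x)|\le 1<\log n$ when $n\ge 3$; if $M$ is nonabelian then $M$ has no abelian composition factors, and $\Comp_A(G)=\Comp_A(M)\cup\Comp_A(G/M)=\Comp_A(G/M)$ with $G/M$ acting... hmm, $G/M$ faithfully on $M$'s component set. \textbf{This nonabelian-primitive subcase is the part I expect to be the main obstacle}, and I would handle it by noting $G/C_G(M)\le\operatorname{Aut}(M)$ acts, and $M\le G_x M$... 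Actually the slick finish: $M$ transitive means $G=G_xM$, so $G/M\cong G_x/(G_x\cap M)$, hence $\Comp_A(G/M)\subseteq\Comp_A(G_x)$; combined with $\Comp_A(M)$ being empty (of abelian factors), we get $\Comp_A(G)\subseteq\Comp_A(G_x)$, so the difference is \emph{empty} in this subcase. That closes it. I would write up the induction with these cases, flagging that the only place needing care is verifying $G=G_xM$ for a minimal normal subgroup $M$ of a primitive group — which is immediate since $M$ is transitive — and that $n\ge 2$ makes the "$\le 1<\log n$" bound valid when $n\ge 3$, with $n=2$ trivial.
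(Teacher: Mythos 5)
Your overall plan (induction on $n$; primitive case via a minimal normal subgroup, imprimitive case via a minimal block system and its kernel) is a genuinely different route from the paper's, but as written it has a gap in the imprimitive case: everything after ``block kernel $K$'' silently assumes $K\ne 1$. If $K=1$ then $K^B$ is trivial, not transitive, and --- more seriously --- the point stabiliser of $G/K\cong G$ in its action on the $k$ blocks is the block stabiliser $G_B$, not $G_xK/K=G_x$: one has $G_B=G_xK$ exactly when $K$ is transitive on $B$, which for a minimal block holds if and only if $K\ne 1$. The case $K=1$ cannot be avoided by choosing the block system more cleverly: for a quasiprimitive but imprimitive group (e.g.\ $\Alt(5)$ acting on the $12$ cosets of a subgroup of order $5$) every block kernel is trivial, being an intransitive normal subgroup. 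So in that case your induction only bounds $\big|\Comp_A(G)\setminus\Comp_A(G_B)\big|$ and never reaches $G_x$. It can be repaired --- apply the inductive idea once more to the (not necessarily faithful) action of $G_B$ on $B$, whose point stabiliser is $G_x$ and whose kernel $L$ satisfies $L\trianglelefteq G_B$, $L\le G_x$, hence $\Comp_A(L)\subseteq\Comp_A(G_x)$, and then work with the faithful image $(G_B)^B$, whose stabiliser $(G_x)^B$ is a quotient of $G_x$ --- but this extra step (or a reformulation of the statement for an arbitrary subgroup $G_x\le G$ of index $n$, dropping faithfulness) is missing from your write-up, and it is exactly the situation the paper has to treat separately in part (a) of \fref{lem:reduct-trans-groups}.

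For comparison, the paper proves \fref{prop:3} with no induction and no primitivity analysis at all: intersect a composition series $G=G_0\triangleright G_1\triangleright\cdots$ with $G_x$; each factor $(G_i\cap G_x)/(G_{i+1}\cap G_x)$ embeds in $G_i/G_{i+1}$, and for a prime $p\nmid n$ the products of the $p$-parts of the two chains are the $p$-parts of $|G_x|$ and $|G|$, which coincide; hence any cyclic composition factor of order $p$ of $G$ with $p\nmid n$ already occurs in $\Comp_A(G_x)$, so the exceptional primes divide $n$ and number fewer than $\log n$. That argument concerns only the abstract pair $(G,G_x)$ of index $n$ and is much shorter; your approach, once the trivial-kernel case is patched, also works, but it essentially re-derives a special case of the machinery the paper reserves for \fref{lem:reduct-trans-groups}. (Both proofs are indifferent to the degenerate value $n=2$, where the stated strict inequality fails for $G=\Sym(2)$, so your ``$n=2$ trivial'' remark is no worse than the paper's own glossing of this point.)
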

\begin{proof}
  Let $G=G_0\triangleright G_1\triangleright G_2\triangleright\dots$
  be a composition chain of $G$.
  Then the subgroups $G_i\cap G_x$ form a subnormal chain
  $G_x\trianglerighteq G_1\cap G_x\trianglerighteq G_2\cap G_x\dots$
  (possibly with repetitions).
  Now $G\big/ G_1\ge G_1\cdot G_x\big/G_1\isom G_x\big/(G_1\cap G_x)$
  and similarly
  $G_i\big/ G_{i+1}\ge G_{i+1}\cdot(G_i\cap G_x)\big/G_{i+1}\isom
  (G_i\cap G_x)\big/(G_{i+1}\cap G_x)$
  for $i\ge1$.

  If $p$ is a prime not dividing $n$,
  then the product of the $p$-parts of the orders
  $\big|G_i\big/G_{i+1}\big|$
  must be the same as the product of the $p$-parts of the orders
  $\big|(G_i\cap G_x)\big/(G_{i+1}\cap G_x)\big|$.
  This implies that if $G$ has a composition factor of order $p$ then
  $G_x$ also has such a composition factor.
  Our statement follows.

\end{proof}

The following result of Wielandt, that appeared first in \cite{Hig} (which
the author found in the insightful survey of Cameron \cite{Cam})
explains why we have
to concentrate on pointstabilisers of transitive groups, rather than on transitive
groups themselves (see \cite[p~110--111]{Cam} for a more detailed explanation).

Since Wielandt's beautiful result is at the heart of our argument,
we will reproduce here his short proof from \cite{Hig}.

For this we need some more notation.

The orbits of $G\leq \Sym(\Omega)$ on $\Omega\times\Omega$ are called
the \emph{orbitals} of $G$ on $\Omega$. For each orbital $\Delta$
there is a \emph{paired orbital} denoted $\Delta'$, where
$(y,x)\in\Delta'$ if and only if $(x,y)\in\Delta$. For each orbital
$\Delta$ of $G$ and each $x\in\Omega$ we define
$\Delta(x)=\left\{y\in\Omega|(x,y)\in\Delta\right\}$.
Such sets are exactly the suborbits of $G_x$. If $\Delta$ and
$\Delta'$ are paired orbitals, then $\Delta(x)$ and $\Delta'(x)$ are
called paired suborbits. A suborbit $\Delta(x)$ is called
\emph{self-paired} if $\Delta(x)=\Delta'(x)$. (To simplify notation we
often denote $\Delta(x)$ by $\Delta$.) 

\begin{lem}\label{lem:4} \textbf{\emph{(Wielandt)}}
  Let $G < \Sym(\Omega)$ be a nonregular primitive group and let
  $\Delta(x)$ be a $G_x$-orbit $\ne\{x\}$,
  let $y$ be an element of $\Delta(x)$ and
  let $y'$ be an element of $\Delta'(x)$,
  where $\Delta'(x)$ is the $G_x$-orbit
  paired with $\Delta(x)$.
  Let $T(x)$ be the kernel of the action of $G_x$ on
  $\Delta(x)$.
  Then all composition factors of $T(x)$ appear as
  composition factors of
  $(G_{x,y})^{\Delta(x)}$ or of $(G_{x,y'})^{\Delta'(x)}$.
\end{lem}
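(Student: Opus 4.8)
The plan is to peel the composition factors off $T(x)$ one induced action at a time, always working along an arc of an orbital graph both of whose endpoints are already fixed, and to use primitivity to guarantee the peeling exhausts $T(x)$. The reductions are easy: if $T(x)=1$ there is nothing to prove, so assume $T(x)\neq1$; since $G$ is primitive, $G_x$ is maximal in $G$; since $G$ is nonregular, $G_x\neq1$, and the set of points fixed by $G_x$, being a block of imprimitivity, is exactly $\{x\}$.

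Next I set up the two kinds of sections that will occur. Let $\Delta$ be the orbital of $G$ on $\Omega\times\Omega$ giving rise to $\Delta(x)$, let $\Gamma$ be its orbital graph on $\Omega$ (so that $a\to b$ in $\Gamma$ means $b\in\Delta(a)$), and write $T'(v)$ for the kernel of the action of $G_v$ on $\Delta'(v)$. For any arc $a\to b$ of $\Gamma$ we have $b\in\Delta(a)$ and $a\in\Delta'(b)$, so $(G_{a,b})^{\Delta(a)}$ is the stabiliser of the point $b$ in the transitive group $(G_a)^{\Delta(a)}$, while $(G_{a,b})^{\Delta'(b)}$ is the stabiliser of the point $a$ in the transitive group $(G_b)^{\Delta'(b)}$. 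As $G$ is transitive on $\Omega$, these constituents are isomorphic as permutation groups for different choices of $a$ (resp.\ $b$), and all point stabilisers in a transitive group are conjugate; hence as abstract groups $(G_{a,b})^{\Delta(a)}\cong(G_{x,y})^{\Delta(x)}$ and $(G_{a,b})^{\Delta'(b)}\cong(G_{x,y'})^{\Delta'(x)}$. So it suffices to show every composition factor of $T(x)$ occurs in $(G_{a,b})^{\Delta(a)}$ or in $(G_{a,b})^{\Delta'(b)}$ for suitable arcs $a\to b$ of $\Gamma$.

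Now the descent. Call a pair $(S,F)$ \emph{admissible} if $S\le T(x)$, $F\supseteq\{x\}\cup\Delta(x)$, and $S$ fixes $F$ pointwise; start from $(T(x),\,\{x\}\cup\Delta(x))$. Given an admissible pair with $S\neq1$ and a vertex $a\in F$ having an out-neighbour $b\in F$: since $S\le G_a$ and $T(a)\trianglelefteq G_a$, $S$ normalises $T(a)$, we have $S\cap T(a)\trianglelefteq S$ and $S/(S\cap T(a))\cong (S)^{\Delta(a)}\le(G_{a,b})^{\Delta(a)}$, so the composition factors split off here occur in $(G_{x,y})^{\Delta(x)}$, and $(S\cap T(a),\,F\cup\Delta(a))$ is again admissible. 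Symmetrically, from a vertex $b\in F$ with an in-neighbour $a\in F$, normalising $T'(b)$ replaces $(S,F)$ by $(S\cap T'(b),\,F\cup\Delta'(b))$ and contributes only composition factors of $(G_{x,y'})^{\Delta'(x)}$. Repeating these moves, every composition factor peeled off lands in one of the two required sections, and the lemma follows once we reach a pair with $S=1$.

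Pushing the descent until $S=1$ is where I expect the real work to be. If $S\neq1$ then $S$ does not fix $\Omega$ pointwise, so $F\neq\Omega$, and since $G$ is primitive the (undirected) orbital graph of $\Delta$ is connected, so some arc joins $F$ to $\Omega\setminus F$. One must then argue that, as long as $F\neq\Omega$, some legal move strictly shrinks $S$ or strictly enlarges $F$ — equivalently, one must rule out a stuck configuration in which for every arc $a\to b$ with $a,b\in F$ one already has $\Delta(a)\subseteq F$, $\Delta'(b)\subseteq F$, and $S\le T(a)\cap T'(b)$, while $F\neq\Omega$. Since $\{x\}\cup\Delta(x)\subseteq F$ and $\bigcap_{v\in\Omega}T(v)=1$ (an element fixing $\Delta(v)$ for every $v$ fixes all of $\Omega$, by connectivity), a careful analysis of the boundary between $F$ and $\Omega\setminus F$ — propagating the conditions $S\le T(v)$ along the connected graph, starting from $x$ — should force $S\le\bigcap_{v}T(v)=1$. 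Making this propagation rigorous is the delicate step; everything else is routine manipulation of composition factors of subnormal subgroups.
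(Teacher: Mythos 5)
There is a genuine gap, and it sits at the heart of your peeling step. When you replace $S$ by $S\cap T(a)$ you identify the quotient $S/(S\cap T(a))$ with $S^{\Delta(a)}$, which is merely \emph{a subgroup} of $(G_{a,b})^{\Delta(a)}$, and you then assert that its composition factors occur among those of $(G_{a,b})^{\Delta(a)}$. Composition factors of an arbitrary subgroup need not be composition factors of the overgroup (a solvable subgroup of a nonabelian simple group already gives a counterexample), so this inference is only valid if $S^{\Delta(a)}$ is \emph{subnormal} in $(G_{a,b})^{\Delta(a)}$. That holds at the very first step, where $S=T(x)\trianglelefteq G_x$ and hence $T(x)\trianglelefteq G_{x,y}$; but after one intersection your $S$ is something like $T(x)\cap T'(y)\cap T(a_1)\cap\cdots$, and nothing in your argument gives subnormality of such an intersection (or of its image) in the two-point stabiliser attached to the next arc. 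So each later "peel" deposits composition factors only into subgroups of the target groups, not into the target groups themselves, and the induction does not prove the lemma. The secondary issue you flag yourself — making the propagation of $F$ to all of $\Omega$ rigorous (which would need, e.g., strong connectivity of the orbital digraph, available since in-degree equals out-degree and primitivity gives weak connectivity) — is real but comparatively minor; fixing it would not repair the composition-factor step.

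It is worth seeing how the paper's (Wielandt's) proof sidesteps exactly this difficulty: instead of tracking an explicit descending chain whose local subnormality would have to be maintained by hand, it uses, for the class of composition factors of $G_{x,y}^{\Delta(x)}$ and $G_{x,y'}^{\Delta'(x)}$, the smallest subnormal subgroup $H^{*}$ of $H$ with all factors between $H$ and $H^{*}$ in that class; by a theorem of Wielandt this subgroup exists and is \emph{characteristic} in $H$. One then computes $T(x)^{*}=G_{x,y}^{*}=U(y)^{*}$ (where $U(y)$ is the kernel of $G_y$ on $\Delta'(y)$), so this single subgroup is normalised by both $G_x$ and $G_y$, hence normal in $\langle G_x,G_y\rangle=G$; since it fixes $x$ and a nontrivial normal subgroup of a primitive group is transitive, it is trivial, which is the assertion. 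The global "normal in $G$, hence trivial by primitivity" step replaces your entire descent, and the characteristicity of $H^{*}$ is precisely the tool that supplies the subnormal control your construction lacks. If you want to salvage your approach, you would need either to prove and use such a characteristic-subgroup statement, or to find a different mechanism forcing subnormality of your successive intersections in the relevant two-point stabilisers.
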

\begin{proof}
  For a subgroup $H$ of $G$, denote by $H^*$ the smallest subnormal
  subgroup of $H$ such that
  every composition factor between $H$ and $H^*$ is a
  composition factor of
  $G^{\Delta(x)}_{x,y}$
  or of
  $G^{\Delta'(x)}_{x,y'}$;
  $H^*$ is a characteristic subgroup of $H$
  (Wielandt \cite[Th. 13, p. 220]{Wie2}).
  Now   $G^{\Delta(x)}_{x,y}\isom G_{x,y}\big/T(x)$
  and therefore $G_{x,y}^*=T(x)^*$.
  Similarly $G_{x,y'}^*\isom U(x)^*$,
  where $U(x)$ denotes the pointwise stabiliser of
  $\{x\}\cup\Delta'(x)$.
  We can choose the notation so that $\Delta(x)^g=\Delta(x^g)$
  for all $x\in\Omega$, $g\in G$.
  Then $\Delta'(x)^g=\Delta'(x^g)$
  and $y\in\Delta(x)$ implies $x\in\Delta'(y)$
  so $G_{x,y}^*=U(y)^*$.
  Hence
  $T(x)^* = U(y)^*\normal \langle G_x, G_y\rangle = G$,
  so that $T(x)^* = 1$
  (since a non-trivial normal subgroup of a primitive group
  must be transitive),
  and the theorem is proved.
\end{proof}

It was a great surprise to the present author that one can prove a
useful result for pointstabilisers of arbitrary transitive groups by
an elementary induction argument
(for transitive groups themselves using induction is quite standard).
However a paper of Isaacs \cite{Is}, where abelian pointstabilisers in
transitive groups are shown to have a distinctive numerical property
gave a strong indication that such an argument may exist and may not
rely on CFSG. (The theorem of Isaacs in \cite{Is} in turn may be traced
back to some results and questions in \cite{BGPy}).
Note also that \fref{thm:7}
below is not very hard to prove
using well-known consequences of CFSG.

\begin{defn}
  Let $D$ be a direct product of the groups $D_1,D_2,\dots,D_t$.
  A subdirect product subgroup  of $D$ is a subgroup $G$
  which projects onto all the constituents $D_1,D_2,\dots,D_t$.
\end{defn}

The following well-known result (see \cite[Ch.2 (4.19)]{Su})
describes the structure of subdirect product
subgroups of two groups.

\begin{lem} \label{lem:subdirect-product-structure}
  Let $G$ be a subdirect product subgroup of $D=D_1\times D_2$.
  Then setting $N_1=D_1\cap G$ and $N_2=D_2\cap G$
  we have
  $N_1\times N_2\normal G$, and
  $G\big/(N_1\times N_2)\isom D_1\big/N_1\isom D_2\big/N_2$.
\end{lem}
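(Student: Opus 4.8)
The plan is to recognise this as Goursat's lemma and prove it directly using the two coordinate projections of $D=D_1\times D_2$. First I would identify $D_1$ with $D_1\times\{1\}$ and $D_2$ with $\{1\}\times D_2$, so that both become normal subgroups of $D$ that centralise each other elementwise. Then $N_1=D_1\cap G$ and $N_2=D_2\cap G$ are normal in $G$, being intersections of $G$ with normal subgroups of $D$; and since every element of $N_1$ commutes with every element of $N_2$, the set $N_1\times N_2=(N_1\times\{1\})\cdot(\{1\}\times N_2)$ is genuinely a subgroup of $G$, indeed a normal one as the product of two normal subgroups.

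Next I would restrict the first projection $\pi_1\colon D\to D_1$ to $G$. Because $G$ is a subdirect product subgroup, $\pi_1(G)=D_1$, and the kernel of $\pi_1|_G$ is $G\cap(\{1\}\times D_2)=\{1\}\times N_2$, which I identify with $N_2$. Hence $\pi_1$ induces an isomorphism $G\big/N_2\isom D_1$, and under it the image of $N_1\times N_2$ is $\pi_1(N_1\times N_2)=N_1$; passing to the further quotient gives $G\big/(N_1\times N_2)\isom D_1\big/N_1$. Running the same argument with the second projection $\pi_2$ yields $G\big/(N_1\times N_2)\isom D_2\big/N_2$, and the lemma follows.

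There is essentially no hard step: the only thing to watch is the bookkeeping of the identifications — treating $N_1$ and $N_2$ simultaneously as subgroups of $D_1$, of $D_2$, and of $G$ — together with the observation that $N_1$ and $N_2$ commute inside $D$, so that $N_1\times N_2$ makes sense as an internal subgroup of $G$. As the statement is classical and already cited from \cite[Ch.2 (4.19)]{Su}, one could simply refer to that source; but the projection argument above is short enough to record in full.
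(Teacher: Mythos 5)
Your argument is correct, and in fact the paper itself gives no proof of this lemma at all: it is simply quoted from Suzuki \cite[Ch.2 (4.19)]{Su}, so your projection (Goursat-type) argument supplies a complete verification that the paper omits. The only point you leave implicit is that $N_1$ is normal in $D_1$ (and $N_2$ in $D_2$), which is needed for the quotients $D_1/N_1$ and $D_2/N_2$ to make sense; this does follow from your setup, either by transporting the normal subgroup $(N_1\times N_2)/N_2\normal G/N_2$ along the isomorphism $G/N_2\isom D_1$ induced by $\pi_1$, or directly: given $d_1\in D_1$, surjectivity of $\pi_1|_G$ gives $(d_1,d_2)\in G$ for some $d_2$, and conjugating $(n,1)\in G\cap D_1$ by $(d_1,d_2)$ shows $d_1nd_1^{-1}\in N_1$. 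With that remark made explicit, your proof is a clean, self-contained replacement for the citation.
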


\begin{prop}\label{prop:5}
  The composition factors of $G$ are among
  the composition factors of $D_1,D_2,\dots,D_t$.
\end{prop}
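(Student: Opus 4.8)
The plan is to induct on the number $t$ of factors. The base case $t=1$ is trivial, since then $G=D_1$. The heart of the matter is the case $t=2$, which I would settle directly from \fref{lem:subdirect-product-structure}. With $N_1=D_1\cap G$ and $N_2=D_2\cap G$ that lemma gives $N_1\times N_2\normal G$ and $G\big/(N_1\times N_2)\isom D_1\big/N_1$, so by the Jordan--H\"older theorem the composition factors of $G$ are precisely those of $N_1\times N_2$ together with those of $D_1\big/N_1$; that is, those of $N_1$, of $N_2$, and of $D_1\big/N_1$. Now $N_1\normal D_1$ and $N_2\normal D_2$ --- this is implicit in the cited lemma, and in any case follows quickly, since $N_1$ is normal in $G$ and, as $G$ projects onto $D_1$ while $D_2$ centralises $D_1$, every element of $D_1$ is induced by an element of $G$ and hence normalises $N_1$. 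Therefore the composition factors of $N_1$ and of $D_1\big/N_1$ are composition factors of $D_1$, and those of $N_2$ are composition factors of $D_2$, which settles the case $t=2$.

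For the inductive step, suppose $t\ge3$ and write $D=D_1\times D'$ with $D'=D_2\times\dots\times D_t$; let $p\colon D\to D'$ be the projection. Then $p(G)$ still projects onto each $D_i$ with $i\ge2$, so it is a subdirect product subgroup of $D'$, and by the inductive hypothesis all its composition factors lie among those of $D_2,\dots,D_t$. Moreover $G\le D_1\times p(G)$, and $G$ projects onto $D_1$ by hypothesis and onto $p(G)$ by construction, so $G$ is a subdirect product subgroup of $D_1\times p(G)$. Applying the case $t=2$ to this two-factor decomposition, every composition factor of $G$ is a composition factor of $D_1$ or of $p(G)$, hence of one of $D_1,D_2,\dots,D_t$.

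I expect no real obstacle. The one step that is not purely formal is the observation that $N_1\normal D_1$, which is what allows one to pass from a composition factor of $N_1$ to a composition factor of $D_1$; note that for an arbitrary non-normal subgroup this passage would fail (for instance $\Alt(5)<\Alt(6)$ with $\Alt(6)$ simple). Everything else is routine bookkeeping with the Jordan--H\"older theorem.
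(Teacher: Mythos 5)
Your proof is correct and is exactly the argument the paper has in mind: its proof of this proposition is the one-line ``easy induction using \fref{lem:subdirect-product-structure}'', which you have simply written out in full (including the needed observation that $N_1\normal D_1$ and $N_2\normal D_2$). No issues.
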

\begin{proof}
  Easy induction using \fref{lem:subdirect-product-structure}.
\end{proof}

The key result of this note is the following.

\begin{thm}\label{thm:7}
  Let $G$ be a transitive permutation group of degree
  $n$ and $G_x$ a pointstabiliser. Then
  $\big|\Comp_A(G_x)\big| < 2(\log n)^2$.
\end{thm}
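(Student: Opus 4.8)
The plan is to induct on the degree $n$, carrying along the companion estimate $|\Comp_A(G)|<2(\log n)^2+\log n$ for transitive $G$ of degree $n$ — which follows from the theorem applied to $G$ together with \fref{prop:3}, and which is what one actually feeds back into the induction. Two cases dispose of themselves. If $G$ is regular then $G_x=1$. If $|G|$ is odd, the Claim established inside the proof of \fref{lem:nulladik} gives $\pi(G)\le n^{\log n}$; since $G_x\le G$, the number of primes dividing $|G_x|$ is at most $\log_2(n^{\log n})=(\log n)^2$, and this already bounds $|\Comp_A(G_x)|$.

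For $G$ imprimitive I would fix a minimal nontrivial block system, with blocks of size $b$ and $k=n/b$ of them, and let $K$ be the kernel of the action on the blocks. Writing $\Comp_A(G_x)\subseteq\Comp_A(G_x\cap K)\cup\Comp_A\!\big(G_x/(G_x\cap K)\big)$, minimality of the blocks makes $K$ act transitively and permutation-equivalently on each block, so $G_{B_1}=K\,G_x$ and hence $G_x/(G_x\cap K)\cong\bar G_{B_1}$ is a point stabiliser of the transitive group $\bar G=G/K$ of degree $k<n$; induction controls this quotient by $2(\log k)^2$. The subgroup $G_x\cap K$ is then handled by descending one further level: its action on $B_1$ has image a point stabiliser of the transitive group $K^{B_1}$ of degree $b<n$, and its kernel acts inside the remaining blocks, so it costs roughly $2(\log b)^2$. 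Because $\log b+\log k=\log n$ one has $(\log b)^2+(\log k)^2\le(\log n)^2$, and the slack must swallow the lower-order contributions.

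For $G$ primitive the engine is Wielandt's \fref{lem:4}. If $|G|$ is even, any nontrivial involution has a $2$-cycle $(\alpha\beta)$, so the orbital through $(\alpha,\beta)$ is self-paired, and $G$ has a self-paired nontrivial suborbit; choose one, $\Delta(x)$, of small size $d$. When $d\le n/2$ — which one can arrange unless $G$ is close to $2$-transitive — apply \fref{lem:4} with $\Delta=\Delta'$: every composition factor of the kernel $T(x)$ of the action of $G_x$ on $\Delta(x)$ appears in $(G_{x,y})^{\Delta(x)}$, which is a point stabiliser of the transitive group $(G_x)^{\Delta(x)}$ of degree $d<n$; since $G_x/T(x)\cong(G_x)^{\Delta(x)}$, \fref{prop:3} puts all of $\Comp_A(G_x)$ inside $\Comp_A\!\big((G_{x,y})^{\Delta(x)}\big)$ together with fewer than $\log d$ further primes, and induction gives $|\Comp_A(G_x)|<2(\log d)^2+\log d\le 2(\log n)^2$.

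What remains — and what I expect to be the main obstacle — is the $2$-transitive and near-$2$-transitive primitive case, where \fref{lem:4} degenerates ($T(x)=1$) and one cannot afford to recurse on $G_x$, whose degree is only $n-1$: a bound that drops by $O(\log n)$ while the degree drops by $1$ can never close. Here I would invoke Burnside's dichotomy; in the affine case $G_x$ embeds in $\Gamma\mathrm L_m(p)$ with $p^m=n$, so $|\Comp_A(G_x)|\le\log_2|\Gamma\mathrm L_m(p)|<(m^2+m)\log p\le 2(\log n)^2$, while the almost simple case is precisely where one has to work to keep everything CFSG-free, since Schreier's hypothesis (and any control of outer automorphism groups of simple groups) is off limits. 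Beyond that, the remaining delicate point is bookkeeping: verifying that in each of the reductions above the unavoidable $O(\log n)$ error terms really do stay under the headline bound $2(\log n)^2$.
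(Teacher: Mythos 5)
There are two genuine gaps, and the second one is the heart of the theorem. First, in your imprimitive step the assertion that minimality of the block system forces the kernel $K$ to act transitively on each block is false when $K=1$, and this is not a degenerate case: if $G$ is quasiprimitive but imprimitive, the kernel of its action on \emph{any} nontrivial block system is trivial. Then $G_{B_1}=KG_x$ fails, $G_x\cong G_x/(G_x\cap K)$ is only a \emph{proper subgroup} of the block stabiliser of $G/K\cong G$, and composition factors of a subgroup are not composition factors of the overgroup, so induction gives you nothing. The paper devotes part (a) of \fref{lem:reduct-trans-groups} precisely to this case: one takes a maximal subgroup $M\supseteq G_x$, uses quasiprimitivity to see that $M$ is core-free, so $M$ is a pointstabiliser of a faithful primitive action of degree $t=|G:M|$, while $G_x$ modulo the kernel of the $M$-action on the cosets of $G_x$ is a pointstabiliser of a transitive group of degree $n/t$. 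A smaller bookkeeping problem in the same step: charging the kernel $J$ of $K_x$ on $B_1$ ``roughly $2(\log b)^2$'' does not close the arithmetic when there are few large blocks (e.g.\ $k=2$, $b=n/2$); the paper instead uses that $K$ is a subdirect product in $T^k$, so by \fref{prop:5} $\Comp_A(J)\subseteq\Comp_A(T)$, and by \fref{prop:3} this costs only an additional $\log$ term beyond $\Comp_A(T_\beta)$, which you have already counted.

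Second, the case you explicitly leave open --- the $2$-transitive and ``near-$2$-transitive'' primitive groups, where the smallest self-paired suborbit is large --- is exactly what the theorem is about, and your proposed fallback does not fill it: Burnside's dichotomy covers only genuinely $2$-transitive groups (not primitive groups whose smallest self-paired suborbit merely exceeds $n/2$), it disposes only of the affine half, and you concede the almost simple half is unresolved without CFSG. The paper closes this case with no extra input, by analysing $P=(G_x)^{\Delta}$ itself in \fref{lem:reduct-prim-groups}. By \fref{lem:4}, $\Comp_A(G_x)\subseteq\Comp_A(P)\cup\Comp_A(P_y)$. If $P$ is primitive with non-abelian socle, then $\Soc(P)\cdot P_y=P$ gives $P/\Soc(P)\cong P_y/\big(P_y\cap\Soc(P)\big)$, hence $\Comp_A(P)\subseteq\Comp_A(P_y)$ and so $\Comp_A(G_x)\subseteq\Comp_A(P_y)$ with \emph{zero} additive loss; the recursion then closes even though the degree drops only from $n$ to $|\Delta|\le n-1$ --- your objection that a bound losing $O(\log n)$ per step cannot survive a degree drop of $1$ applies only to your version, which insists on the $\log d$ penalty from \fref{prop:3}. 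If $\Soc(P)$ is abelian, \fref{prop:Socle}\fref{item:8} bounds $\log|P|$, hence the number of relevant primes, by about $(\log n)^2$ directly. If $P$ is imprimitive, the transitive reduction splits the degree multiplicatively and the cross term $4\log t\log k\ge2\log n$ absorbs the additive losses. So your outline reproduces the easy parts of the paper's induction (odd order via the Claim, small self-paired suborbit via Wielandt plus \fref{prop:3}, the quantitative splitting), but the two places where you would have to work --- quasiprimitive imprimitive groups, and primitive groups with only large self-paired suborbits --- are exactly where the paper's distinctive arguments live, and they are missing from the proposal.
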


The proof of \fref{thm:7} is based on two reduction lemmas:
\fref{lem:reduct-trans-groups} and \fref{lem:reduct-prim-groups}.

\begin{defn}
  A group $G < \Sym(\Omega)$ is called \emph{quasiprimitive} if it is
  transitive and each of its non-trivial normal subgroups are also
  transitive. Primitive permutation groups are quasiprimitive
  (see \cite{Pr} for more on quasiprimitive qroups).
\end{defn}

\begin{lem}[Reduction for transitive groups]
  \label{lem:reduct-trans-groups}
  Let $G$ be a transitive imprimitive group of degree $n$.
  We have
  $$
  \big|\Comp_A(G_x)\big| \le
  \big|\Comp_A(X_\alpha)\big|+\big|\Comp_A(Y_\beta)\big|+\log n,
  $$
  where $X$ and $Y$ are certain transitive groups acting on sets
  of size $t\ge2$ and $m\ge2$ respectively, where $tm\le n$,
  and $X_\alpha$ and $Y_\beta$ are pointstabilisers.
\end{lem}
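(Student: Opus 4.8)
The plan is to pick one nontrivial block system for $G$ and to decompose the point stabiliser $G_x$ into the permutation group it induces on the block $B_1$ containing $x$ — this will be the point stabiliser $Y_\beta$ of a transitive group $Y$ of degree $b=|B_1|$ — and the pointwise stabiliser $P$ of $B_1$ in $G$. The group $Y$ together with $X=G^{\CB}$ plays the role of $Y$ and $X$ in the statement, with $t=k$, $m=b$ and $tm=n$. The real work is to bound $\big|\Comp_A(P)\big|$ by $\big|\Comp_A(X_\alpha)\big|+\log m$.

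So I would fix any nontrivial block system $\CB=\{B_1,\dots,B_k\}$ with $|B_i|=b$, so $k\ge2$ and $b\ge2$. Set $X=G^{\CB}$ (transitive of degree $k$), let $K\normal G$ be the kernel of the action of $G$ on $\CB$, let $G_{B_1}\le G$ be the setwise stabiliser of $B_1$, and let $Y=G_{B_1}^{B_1}$, which is transitive of degree $b$ because $G$ is transitive on $\Omega$. Let $P\normal G_{B_1}$ be the pointwise stabiliser of $B_1$ in $G$, i.e.\ the kernel of the map $G_{B_1}\to Y$. Since $x\in B_1$, its stabiliser $G_x$ lies in $G_{B_1}$ and consists exactly of those $g\in G_{B_1}$ whose restriction to $B_1$ fixes $x$; hence $G_x$ is the full preimage of $Y_\beta$ under $G_{B_1}\to Y$, so $P\normal G_x$ and $G_x\big/P\isom Y_\beta$. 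Therefore $\big|\Comp_A(G_x)\big|\le\big|\Comp_A(Y_\beta)\big|+\big|\Comp_A(P)\big|$. Note also that $K\le G_{B_1}$ and $G_{B_1}\big/K\isom X_\alpha$.

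Next I would bound $\Comp_A(P)$. Put $L=P\cap K$; then $L\normal P$ and $\Comp(P)=\Comp(P\big/L)\sqcup\Comp(L)$. Since $P$ and $K$ are normal in $G_{B_1}$, so is $PK$, and $P\big/L\isom PK\big/K$ is a \emph{normal} subgroup of $G_{B_1}\big/K\isom X_\alpha$; hence $\Comp_A(P\big/L)\subseteq\Comp_A(X_\alpha)$. For $L$: it fixes $B_1$ pointwise and stabilises each of $B_2,\dots,B_k$ setwise, so it embeds as a subdirect product subgroup of $\prod_{i=2}^{k}L^{B_i}$, and \fref{prop:5} gives $\Comp(L)\subseteq\bigcup_{i=2}^{k}\Comp\big(L^{B_i}\big)$. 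As $L\normal K$, each $L^{B_i}$ is normal in $K^{B_i}$, and the groups $K^{B_i}$ are all (permutation) isomorphic to $K^{B_1}$ because $G$ is transitive on $\CB$ and $K\normal G$; so $\Comp(L)\subseteq\Comp\big(K^{B_1}\big)$. Finally $K\normal G_{B_1}$ forces $K^{B_1}\normal Y$, whence $\Comp_A(L)\subseteq\Comp_A(Y)$, and \fref{prop:3} applied to the transitive group $Y$ of degree $b$ bounds $\big|\Comp_A(Y)\setminus\Comp_A(Y_\beta)\big|$ by $\log b$. Assembling the pieces, $\Comp_A(G_x)$ is contained in $\Comp_A(X_\alpha)\cup\Comp_A(Y_\beta)$ together with at most $\log b\le\log n$ further isomorphism types, which is the claimed inequality (with $t=k$, $m=b$, $tm=n$).

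The step I expect to be the crux is the treatment of $L$. A priori $P\big/L$ and the $L^{B_i}$ are merely \emph{subgroups} of $X_\alpha$ and of $Y$, and a subgroup can easily have composition factors that its ambient group lacks, so \fref{prop:5} and \fref{prop:3} would be useless. The decisive point is that the normality of $P$, $K$ and $L$ in the relevant overgroups makes $P\big/L$ and the $L^{B_i}$ \emph{normal} subgroups, so their composition factors are inherited from $X_\alpha$ and from $Y$; once that is seen, everything else is routine. In particular, no minimality or maximality hypothesis on $\CB$ is needed.
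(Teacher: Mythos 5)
Your proof is correct, but it takes a genuinely different route from the paper's. The paper argues by cases: if $G$ is quasiprimitive (but imprimitive) it picks a maximal subgroup $M\ge G_x$, uses that the action of $G$ on the cosets of $M$ is faithful and primitive, and splits $G_x$ through the kernel of the action of $M$ on the cosets of $G_x$; if $G$ is not quasiprimitive it takes the block system formed by the orbits of a nontrivial intransitive normal subgroup, so that the kernel $K$ of the block action is transitive on every block, writes the block stabiliser as $KG_b$, and takes $Y=K^{B_1}$, bounding the factors of the relevant kernel via \fref{prop:5} and \fref{prop:3} applied to $K^{B_1}$. You instead fix an arbitrary nontrivial block system and decompose in the opposite order: first $G_x/P\isom Y_\beta$ with $Y=G_{B_1}^{B_1}$ (which is always transitive, so you never need $K$ to be transitive on the blocks), then $P$ through $L=P\cap K$, using normality throughout ($PK/K\normal G_{B_1}/K\isom X_\alpha$, and $L^{B_i}\normal K^{B_i}\isom K^{B_1}\normal Y$) to transport composition factors, with \fref{prop:5} applied to $L$ inside $\prod_{i\ge2}L^{B_i}$ and \fref{prop:3} applied to $Y$ at the end. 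Your observation that normality is what makes \fref{prop:5} and \fref{prop:3} usable is exactly the right crux, and it lets you dispense with the quasiprimitive case and with any special choice of block system; what the paper's more elaborate choices buy is a marginally sharper conclusion in the quasiprimitive case (no $\log n$ term there) and a smaller group $Y$ in the other case, but the statement proved, and its role in the induction for \fref{thm:7}, is the same.
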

\begin{proof}
\begin{enumerate}[\indent a)]
\item
  Let us first assume that $G$ is quasiprimitive but not primitive.
  Since $G$ is not primitive $G_x$ is not a maximal subgroup of $G$.
  Consider a proper maximal subgroup $M$ of $G$ containing $G_x$.
  Let $N$ be the (unique) maximal normal subgroup of $G$ contained in $M$.
  If $N\ne1$, then
  the quasiprimitivity of $G$ implies that $G=NG_x\le M$, a contradiction.
  Hence the (primitive) representation of $G$ on the cosets of $M$
  is faithful of degree, say, $t$.

  Consider now the representation of $M$ on the cosets of $G_x$.
  The kernel of this action is a normal subgroup $K$ of $G_x$,
  hence all composition factors
  of $K$ appear among the composition factors of $M$,
  which by the above is the pointstabiliser of a transitive group
  of degree $|G:M| =t$.
  The ``remaining'' composition factors of $G_x$ are the composition factors of
  $G_x\big/K$, which is clearly a pointstabiliser of
  a transitive group of degree $|M:G_x|= n/t$.
\item
  Let us next assume that $G$ is a transitive group which has a non-trivial
  intransitive normal subgroup $N$. The orbits of $N$ form a system of
  imprimitivity $B=B_1,B_2,\dots,B_t$ of $G$.
  Denote by $K$ the kernel of the action of $G$ on this system of
  blocks.
  Denote by $\tilde G$ the image of this action
  and by $\tilde G_{\tilde B}$ the stabiliser of the point
  ${\tilde B}$.
  Clearly this stabiliser has index $t$ in $\tilde G$.
  Let $H$ be the inverse image of this stabiliser
  $\tilde G_{\tilde B}$ in $G$, a subgroup of index $t$ in $G$.
  It is clear that $H$ contains $KG_b$ for an element $b$ of $B$.
  Since $K$ is transitive on $B$
  we have $|B|=|K:K \cap G_b|$.
  But this is equal to $|KG_b:G_b|$ and therefore actually $H=KG_b$.
  Now $H/K=KG_b/K$ is isomorphic to $G_b/(K \cap G_b)$
  hence the composition factors in this quotient of $G_b$
  are the composition factors of $\tilde G_{\tilde B}$.

  The remaining composition factors of $G_b$ are
  the composition factors of $K_b$.
  Let $J$ be the kernel of the action of $K_b$ on $B$.
  Now $J$ is also the kernel of the action of $K$ on $B$
  hence a normal subgroup of $K$.
  Note that $K$ is a subdirect product subgroup of the $t$-th direct power
  of some transitive group $T$ of degree $n/t=m$.

  Now $K_b/J$ is permutation equivalent to
  a pointstabiliser in $T$.
  By \fref{prop:5} the composition factors of $J$ are
  among the composition factors of $T$.
  By \fref{prop:3},
  at most $\log m$ of the composition factors of $T$ (hence $J$)
  do not occur among the composition
  factors of a pointstabiliser in $T$,
  hence the same is true for $K_b$.
\end{enumerate}
\end{proof}

Recall that the socle $\Soc(G)$ of a group $G$ is the product of its
minimal normal subgroups. The following is well-known (see also \cite{Ba1}).

\begin{prop} \label{prop:Socle}
  \begin{enumerate}
  \item \label{item:6}
    The socle is a direct product of simple groups.
  \item \label{item:7}
    The socle of a primitive permutation group is a direct product of
    isomorphic simple groups.
  \item \label{item:8}
    If the socle of a primitive permutation group $G$ is abelian then it is
    elementary abelian of order $p^s=n$ and $G\big/\Soc(G)$ embeds into $GL(s,p)$.
    In particular we have $\big|G\big/\Soc(G)\big|<p^{(s^2)}$.
  \end{enumerate}
\end{prop}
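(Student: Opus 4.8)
The plan is to treat the three assertions in turn, deriving each from standard facts about characteristically simple groups and about centralisers of transitive and of regular permutation groups (all available in \cite{DM}). For the first assertion, I would recall that every minimal normal subgroup $N$ of a finite group is \emph{characteristically simple} (a characteristic subgroup of $N$ is normal in the ambient group), and that a finite characteristically simple group is a direct product of isomorphic simple groups; the latter I would either quote or prove quickly by taking a minimal normal subgroup $S$ of $N$, noting that $\langle S^g : g \in G\rangle$ is normal in $G$ and hence equals $N$ by minimality, and then passing to a maximal subfamily of the conjugates $S^g$ whose product is direct. Writing $\Soc(G)$ as a product $N_1\cdots N_k$ of minimal normal subgroups and using that distinct minimal normal subgroups intersect trivially, a short exchange argument then yields $\Soc(G)=\prod_{i\in I}N_i$ for a suitable $I$ with the product direct; since each $N_i$ is a direct product of simple groups, so is $\Soc(G)$.

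For the second assertion, let $G<\Sym(\Omega)$ be primitive, so that every non-trivial normal subgroup of $G$ is transitive. If $G$ has only one minimal normal subgroup $N$, then $\Soc(G)=N$ and we are done by the first part. Otherwise I would pick distinct minimal normal subgroups $M\ne N$; then $[M,N]\le M\cap N=1$, so $N\le C_G(M)$. Since $M$ is transitive, $C_{\Sym(\Omega)}(M)$ is semiregular, hence so is $C_G(M)$, which is also transitive (it contains $N$) and therefore regular of order $n$; consequently $N=C_G(M)$, and since $C_{\Sym(\Omega)}(M)$ — being the centraliser of a regular group — is regular and isomorphic to $M$, we get $N\cong M$. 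The same reasoning with the roles of $M$ and $C_G(M)$ interchanged shows that $M$ is regular and that any third minimal normal subgroup would coincide with $C_G(M)=N$; so $\Soc(G)=M\times N$ with $M\cong N$ a direct product of isomorphic simple groups, hence $\Soc(G)$ is one as well.

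For the third assertion, suppose $\Soc(G)$ is abelian. By the first two parts it is then a direct product of copies of the group $\BZ/p$ for a single prime $p$, i.e.\ elementary abelian of order $p^s$ for some $s$. As a non-trivial normal subgroup of the primitive group $G$ it is transitive, and an abelian transitive group is regular (its point stabilisers are all equal, being conjugate by commuting elements, and their intersection is trivial), so $p^s=n$. Because $\Soc(G)$ is regular, $C_G(\Soc(G))$ is semiregular and contains the transitive group $\Soc(G)$, whence $C_G(\Soc(G))=\Soc(G)$; thus $G\big/\Soc(G)=G\big/C_G(\Soc(G))$ embeds in $\Aut(\Soc(G))=GL(s,p)$. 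Finally $|GL(s,p)|=\prod_{i=0}^{s-1}(p^s-p^i)<p^{s^2}$, which gives the claimed inequality.

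None of these steps is difficult; the only real care is needed in the second part, where one must correctly invoke the structure of the centraliser in $\Sym(\Omega)$ of a transitive and of a regular subgroup, both to bound the number of minimal normal subgroups by two and to upgrade "equal order" to "isomorphic" for the two of them. Everything else is routine bookkeeping with composition series together with the standard description of finite characteristically simple groups.
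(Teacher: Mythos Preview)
Your argument is correct and is the standard textbook proof of these facts. The paper itself does not give a proof of this proposition at all: it merely records the statement as ``well-known (see also \cite{Ba1})'' and moves on, so there is nothing to compare against beyond noting that your write-up supplies exactly the kind of proof one finds in \cite{DM}.
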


\begin{lem}[Reduction for primitive groups]
  \label{lem:reduct-prim-groups}
  Let $G$ be a primitive group of degree $n$
  with a self-paired suborbit $\Delta$.
  Then one of the following holds:
  \begin{enumerate}
  \item \label{item:1}
    $
    \big|\Comp_A(G_x)\big|\le(\log n)^2
    $
  \item \label{item:2}
    $
    \big|\Comp_A(G_x)\big|\le\big|\Comp_A(P_y)\big|
    $
    for some primitive permutation group $P$ acting on $\Delta$
    and some $y\in\Delta$,
  \item \label{item:3}
    $
    \big|\Comp_A(G_x)\big| \le
    \big|\Comp_A(X_\alpha)\big|+\big|\Comp_A(Y_\beta)\big|+2\log n,
    $
    where $X$ and $Y$ are certain transitive groups acting on sets
    of size $t\ge2$ and $m\ge2$ respectively, where $tm\le|\Delta|\le n$,
    and $X_\alpha$ and $Y_\beta$ are pointstabilisers.
  \end{enumerate}
\end{lem}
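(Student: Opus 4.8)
The plan is to study the action of the pointstabiliser $G_x$ on $\Delta$ via the permutation group $H=(G_x)^{\Delta}$ it induces. Fix $y\in\Delta$ and let $T=T(x)\normal G_x$ be the kernel of this action, so $G_x/T\isom H$ and $\Comp_A(G_x)=\Comp_A(T)\cup\Comp_A(H)$. We may assume $\Delta$ is non-trivial, i.e.\ $|\Delta|\ge2$; then $G_x\ne1$, $G$ is nonregular, \fref{lem:4} applies, and since $\Delta$ is self-paired we may take $y'=y$ there. Wielandt's lemma then says that every composition factor of $T$ occurs in $(G_{x,y})^{\Delta}$, which is exactly the pointstabiliser $H_y$ of $y$ in $H$ (any $h\in H$ fixing $y$ lifts to some $g\in G_x$ fixing both $x$ and $y$). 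Hence $\Comp_A(T)\subseteq\Comp_A(H_y)$, and throughout
$$
\Comp_A(G_x)\ \subseteq\ \Comp_A(H_y)\cup\Comp_A(H);
$$
it therefore remains only to locate $\Comp_A(H)$ relative to $\Comp_A(H_y)$, and I would do this by a case analysis on the transitive group $H$ of degree $|\Delta|$.

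If $H$ is imprimitive, \fref{lem:reduct-trans-groups} applied to $H$ gives transitive groups $X,Y$ on sets of sizes $t,m\ge2$ with $tm\le|\Delta|$ and $\big|\Comp_A(H_y)\big|\le\big|\Comp_A(X_\alpha)\big|+\big|\Comp_A(Y_\beta)\big|+\log|\Delta|$, while \fref{prop:3} applied to $H$ gives $\big|\Comp_A(H)\setminus\Comp_A(H_y)\big|<\log|\Delta|$. Combined with the displayed inclusion,
$$
\big|\Comp_A(G_x)\big|\ \le\ \big|\Comp_A(H_y)\big|+\log|\Delta|\ \le\ \big|\Comp_A(X_\alpha)\big|+\big|\Comp_A(Y_\beta)\big|+2\log|\Delta|,
$$
and $|\Delta|\le n$ turns this into \fref{item:3}.

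If $H$ is primitive, look at $\Soc(H)$: by \fref{prop:Socle} it is a direct product of isomorphic simple groups, and being a non-trivial normal subgroup of the primitive group $H$ it is transitive on $\Delta$, so $\Soc(H)\not\le H_y$ while $H_y$ is maximal in $H$; hence $\Soc(H)H_y=H$. If $\Soc(H)$ is non-abelian, then $\Comp_A(\Soc(H))=\varnothing$, so $\Comp_A(H)=\Comp_A\big(H/\Soc(H)\big)=\Comp_A\big(H_y/(H_y\cap\Soc(H))\big)\subseteq\Comp_A(H_y)$, and the displayed inclusion gives $\Comp_A(G_x)\subseteq\Comp_A(H_y)$ --- this is \fref{item:2}, with $P=H$ acting on $\Delta$. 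If $\Soc(H)$ is abelian, then by \fref{prop:Socle} it is elementary abelian of order $p^{s}=|\Delta|$ acting regularly, and $H=\Soc(H)\rtimes H_y$ with $H_y\isom H/\Soc(H)\hookrightarrow GL(s,p)$; thus $\Comp_A(H)=\{C_p\}\cup\Comp_A(H_y)$, so $\Comp_A(G_x)\subseteq\{C_p\}\cup\Comp_A(H_y)$ and
$$
\big|\Comp_A(G_x)\big|\ \le\ 1+\log|GL(s,p)|\ <\ 1+s^{2}\log p .
$$
Since $n>p^{s}$ we have $(\log n)^{2}>(s\log p)^{2}\ge s^{2}\log p$, so an elementary estimate --- with the sharper $|GL(s,2)|<2^{s^{2}-1}$ for $s\ge2$ and the exact value $|GL(1,p)|=p-1$ covering the few small pairs $(s,p)$ --- gives $\big|\Comp_A(G_x)\big|\le(\log n)^{2}$, which is \fref{item:1}.

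The step I expect to require the most care is the non-abelian socle case, and it is exactly where CFSG is bypassed: instead of controlling $H/\Soc(H)$ through Schreier's hypothesis (as in Babai's argument), one simply uses $H=\Soc(H)H_y$, so that every composition factor of $H$ above the socle already appears in $H_y$ --- nothing is needed about the outer automorphism group of a simple group, and the passage to $H_y$ on the strictly smaller set $\Delta$ (which is what feeds the induction in \fref{thm:7}) loses nothing. The remaining points are bookkeeping: verifying $(G_{x,y})^{\Delta}=H_y$ so that \fref{lem:4} yields $\Comp_A(T)\subseteq\Comp_A(H_y)$, checking that the single extra factor $C_p$ in the affine case fits within the $(\log n)^{2}$ budget, and assembling the bounds from \fref{lem:reduct-trans-groups} and \fref{prop:3} in the imprimitive case.
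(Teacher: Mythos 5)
Your argument is essentially the paper's own proof: you apply Wielandt's lemma (with $y'=y$ by self-pairedness) to get $\Comp_A(G_x)\subseteq\Comp_A\big((G_x)^\Delta\big)\cup\Comp_A\big(((G_x)^\Delta)_y\big)$, then split according to whether the induced group on $\Delta$ is imprimitive (feeding into \fref{lem:reduct-trans-groups} and \fref{prop:3}) or primitive with non-abelian socle (where $\Soc\cdot P_y=P$ gives \fref{item:2}) or abelian socle (where \fref{prop:Socle}\fref{item:8} gives \fref{item:1}). The only differences are cosmetic: you keep the given $\Delta$ rather than passing to the smallest non-trivial self-paired suborbit, and you spell out the $GL(s,p)$ estimate (including the small $(s,p)$ cases) that the paper dismisses as easy — both fine.
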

\begin{proof}
  We may assume that $G$ is nonregular.
  Let $\Delta$ be the smallest non-trivial self-paired orbit of
  $G_x$.
  Let $P$ denote the image of the action of $G_x$ on $\Delta$.
  By \fref{lem:4}
  $$
  (*)\kern 20pt
  \Comp_A(G_x)\subseteq\Comp_A(P)\cup\Comp_A(P_y).
  $$
  Assume first that $P$ is primitive.
  If $\Soc(P)$ is abelian, then
  using \fref{prop:Socle}\fref{item:8}
  it is easy to see that \fref{item:1} holds.
  If $\Soc(P)$ is non-abelian,
  then $\Comp_A(P)=\Comp_A\big(P\big/\Soc(P)\big)$.
  Clearly $\Soc(P)\cdot P_y=P$, hence
  $$
  P\big/\Soc(P)\isom P_y\big/\big(P_y\cap\Soc(P)\big).
  $$
  In particular, $\Comp_A(P)\subseteq\Comp_A(P_y)$,
  and $(*)$ implies \fref{item:2}.

  Assume next that $P$ is imprimitive.
  \fref{prop:3} and $(*)$ imply
  that
  $$
  \big|\Comp_A(G_x)\big|\le
  \big|\Comp_A(P_y)\big|+\log n.
  $$
  Applying \fref{lem:reduct-trans-groups} to $P$ acting on $\Delta$
  we obtain \fref{item:3}.
\end{proof}

\begin{proof}[The proof of \fref{thm:7}]
  The proof of the theorem
  will be simple induction based on 
  \fref{lem:reduct-trans-groups}   and \fref{lem:reduct-prim-groups}.

  Let us first assume that $G$ is primitive.
  If $G$ has odd order then our statement follows from the Claim in
  the proof of \fref{lem:nulladik}.
  Otherwise $G$ contains an element of order two,
  hence it has a self-paired suborbit \cite[3.2.5]{DM},
  and we can apply \fref{lem:reduct-prim-groups}.

  In case \fref{item:1} and \fref{item:2} of
  \fref{lem:reduct-prim-groups}
  our statement follows.
  In case \fref{item:3} we may assume that $m\ge t$,
  and set $k=\frac nt$. Note that $k\ge\sqrt{n}$ and $t\ge2$.
  We have
  $2(\log(tk))^2 = 2(\log t)^2 +2(\log k)^2 +4\log t\log k\ge
  2(\log t)^2 + 2(\log k)^2 +2\log n\ge
  \big|\Comp_A(X_\alpha)\big|+\big|\Comp_A(Y_\beta)\big|+2\log n\ge
  \big|\Comp_A(G_x)\big|$.
  The inductive step follows.

  The inductive step for transitive imprimitive groups
  (using \fref{lem:reduct-trans-groups})
  is similar, but easier.
\end{proof}

Next we will use \fref{thm:7}
to give a CFSG-free proof of a weaker version of
the key group-theoretic result of Babai \cite{Ba1}.

We will also rely on the following well-known fact.

\begin{prop}\label{prop:weak-ONan-Scott}
The sum of the first $m$ primes is $O(m^2 \log m)$.
\end{prop}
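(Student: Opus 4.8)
The plan is to reduce the statement to the classical elementary estimate that the $m$-th prime $p_m$ satisfies $p_m = O(m\log m)$. Granting this, the Proposition is immediate: the sum of the first $m$ primes is at most $m\,p_m = O(m^2\log m)$ (one even gets $\sum_{k\le m}p_k \le m\,p_m$ with no further work). So the real content is a Chebyshev-type lower bound for the prime-counting function $\pi$, followed by an inversion; no appeal to the Prime Number Theorem is needed.

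First I would prove that $\pi(x)\ge c\,x/\log x$ for some absolute constant $c>0$ and all $x\ge x_0$. The shortest route is via $L_n:=\operatorname{lcm}(1,2,\dots,n)$. On the one hand $L_n\ge 2^{\,n-1}$ for $n$ large — a classical fact with a one-line proof, e.g.\ from the identity $\int_0^1 x^{k}(1-x)^{n-k}\,dx = \bigl((n+1)\binom{n}{k}\bigr)^{-1}$, which (expanding $(1-x)^{n-k}$) shows that $(n+1)\binom{n}{k}$ divides $L_{n+1}$ for each $k\le n$; taking $k=\lfloor n/2\rfloor$ and $\binom{n}{\lfloor n/2\rfloor}\ge 2^{n}/(n+1)$ gives $L_{n+1}\ge 2^{n}$. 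On the other hand, the exponent of a prime $p$ in $L_n$ is $\lfloor\log_p n\rfloor$, so $p^{\lfloor\log_p n\rfloor}\le n$ and hence $L_n=\prod_{p\le n}p^{\lfloor\log_p n\rfloor}\le n^{\pi(n)}$. Combining the two bounds, $\pi(n)\log n\ge n-1$, i.e.\ $\pi(n)\ge (n-1)/\log n$. (One could equally run the usual argument with the central binomial coefficient $\binom{2n}{n}$; only the lower bound on $\pi$ is required here.)

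It then remains to invert this. Set $x=p_m$, so that $\pi(x)=m$ and $m\ge (x-1)/\log x$. Since $t\mapsto (t-1)/\log t$ is increasing for $t\ge 3$, if we had $x\ge m^2$ we would get $m\ge (m^2-1)/(2\log m)$, which is false for all sufficiently large $m$; hence $p_m=x<m^2$ eventually, so $\log x<2\log m$, and feeding this back into $x-1\le m\log x$ yields $p_m=x<1+2m\log m=O(m\log m)$, the finitely many small $m$ being absorbed into the implied constant. This completes the plan. The only step needing any care is this last inversion: one must first secure the crude a priori bound $p_m<m^2$ — which is exactly what the monotonicity of $(t-1)/\log t$ provides — before substituting $\log x<2\log m$ back into the Chebyshev inequality; everything else is routine.
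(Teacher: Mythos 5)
Your argument is correct, and it is in fact more than the paper itself offers: the paper states this Proposition as a well-known fact and gives no proof at all, merely pointing to \cite{Sinha} for sharp asymptotics of $\sum_{k\le m}p_k$. Your route is the standard elementary one and it is sound in every step: the Beta-integral identity $\int_0^1 x^k(1-x)^{n-k}\,dx = \bigl((n+1)\tbinom{n}{k}\bigr)^{-1}$ does show that $(n+1)\tbinom{n}{k}$ divides $\operatorname{lcm}(1,\dots,n+1)$, giving $L_{n+1}\ge 2^n$, while $L_n\le n^{\pi(n)}$ yields the Chebyshev-type bound $\pi(n)\gg n/\log n$; and your inversion is done carefully (the a priori bound $p_m<m^2$ via monotonicity of $(t-1)/\log t$ before substituting $\log p_m<2\log m$ is exactly the point where sloppier write-ups go wrong). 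The crude step $\sum_{k\le m}p_k\le m\,p_m$ then gives the $O(m^2\log m)$ bound, which is all that \fref{lem:key} needs: there one only wants that $\Alt(k)$ contains a cyclic subgroup whose order is a product of roughly $\sqrt{k/\log k}$ distinct primes, for which the stated upper bound on the sum of the first $m$ primes (with any absolute constant) suffices. One cosmetic remark: the paper's convention is $\log=\log_2$, under which your inequality $\pi(n)\log n\ge n-1$ is literally what the comparison $2^{n-1}\le L_n\le n^{\pi(n)}$ gives; with natural logarithms a factor $\ln 2$ appears, but this is immaterial for the $O$-statement. So your proposal supplies a self-contained elementary proof where the paper simply defers to the literature.
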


See \cite{Sinha}
for a recent reference concerning sharp bounds.


\vskip 5pt
Here is the promised weaker version of Lemma 8.5 of \cite{Ba1}.

\begin{lem} \label{lem:key}
  Let $G$ be a primitive group of degree $n$. Assume that
  $\phi: G \to \Alt(k)$ is an epimorphism,
  where $k > (\log n)^5$. 
If $k$ is large enough then $\varphi$ is an isomorphism; hence $G\cong \Alt(k)$.
\end{lem}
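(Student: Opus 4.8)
The plan is to prove that $\ker\phi=1$ once $k$ is large enough; since $\Alt(k)$ is then simple this is equivalent to $\phi$ being an isomorphism. Put $N=\ker\phi$ and suppose $N\neq1$. As $G$ is primitive, $N$ is transitive, so $G=NG_x$ and the restriction $\phi|_{G_x}\colon G_x\to\Alt(k)$ is surjective. Hence $\Alt(k)$ is a composition factor of $G_x$, and—since $\Alt(k)$ is non-abelian for large $k$—we get $\Comp_A(G_x)=\Comp_A(N\cap G_x)$. By \fref{thm:7} we have $\big|\Comp_A(G_x)\big|<2(\log n)^2$, and the hypothesis $k>(\log n)^5$ turns this into $\big|\Comp_A(N\cap G_x)\big|<2(\log n)^2<2k^{2/5}$. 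The remainder of the argument aims either to contradict this or to force $n$ to be astronomically large compared with $k$.

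I would run a structural induction based on a minimal normal subgroup $M$ of $G$ contained in $N$ (one exists: take a subgroup minimal among the non-trivial normal subgroups of $G$ lying in $N$). By primitivity $M$ is transitive, and $M=T^r$ for a simple group $T$. If $T$ is abelian then $M=\Soc(G)$ is elementary abelian and regular, $n=p^r$, and by \fref{prop:Socle}\fref{item:8} the pointstabiliser $G_x\cong G/\Soc(G)$ embeds in $GL(r,p)$; hence $k!/2=|\Alt(k)|\le|GL(r,p)|<p^{r^2}$, so $\log(k!/2)<r^2\log p=(\log n)^2/\log p\le(\log n)^2<k^{2/5}$, impossible since $\log(k!/2)=\Omega(k\log k)$. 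If $T$ is non-abelian I would invoke the structural (CFSG-free) part of the O'Nan--Scott theorem for $M=\Soc(G)$. Let $G$ act by conjugation on the $r$ simple factors of $M$, with image $P\le\Sym(r)$ and kernel $G_0$. Since $NG_0/N$ is normal in $G/N\cong\Alt(k)$, it is trivial or all of $\Alt(k)$. In the first case $\Alt(k)$ is a quotient of $G/G_0\cong P\le\Sym(r)$, so $k\le r$; but then each O'Nan--Scott type of non-abelian socle gives $n\ge5^{r}\ge5^{k}$, so $\log n\ge k\log5$, contradicting $k>(\log n)^5$. In the second case $\Alt(k)$ is a quotient of $G_0$, and—using \fref{prop:5} on the projections $G_0\to\Aut(T)^r$, together with a short discussion of $C_G(M)$—one finds that $\Alt(k)$ is a composition factor of $\Aut(T)$; if $\Alt(k)\cong T$ the socle is a power of $\Alt(k)$ and again $\log n=\Omega(k\log k)$, while if $\Alt(k)\in\Comp\big(\Out(T)\big)$ we are in the one genuinely hard case.

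The main obstacle is exactly this last case: $T$ is a non-alternating simple group admitting a faithful transitive (indeed primitive) action of degree at most $n$, while $\Aut(T)$ involves $\Alt(k)$, so $|\Aut(T)|\ge k!/2$. Here Schreier's Hypothesis—that $\Out(T)$ is solvable, which is what Babai used—is unavailable. The substitute must be a CFSG-free bound ruling out $|\Aut(T)|$ this large while $T$ sits in $\Sym(n)$: using the elementary estimate $|\Aut(T)|\le|T|^{\log_2|T|}$ it suffices to bound $\log|T|$ by a small power of $\log n$, and then $\log(k!/2)\le\log|\Aut(T)|$ forces $k$ to be at most polylogarithmic in $n$, the exponent $5$ being the slack that lets the estimates close (one power of $\log n$ coming from \fref{thm:7} and \fref{prop:3}, a further loss in passing from $|T|$ to $|\Aut(T)|$, with \fref{prop:weak-ONan-Scott} controlling the arithmetic bookkeeping). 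I expect the difficulty to be concentrated here—extracting, CFSG-free, a usable bound on $|T|$ (equivalently a replacement for the boundedness of $\Out(T)$) that remains effective even though the action of $T$ may have degree as large as $n$, presumably by feeding the hypothesis back into \fref{thm:7} through an auxiliary permutation representation of $\Aut(T)$; the abelian and ``large socle'' cases above are routine once the reductions are set up.
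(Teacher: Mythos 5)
Your opening reductions are fine (if $N=\ker\phi\neq1$ then $N$ is transitive, $G=NG_x$, and all abelian composition factors of $G_x$ lie in $N\cap G_x$), but the proof is not complete, and the route you chose cannot be closed as sketched. By your own account, the case where $\Soc(G)=T^r$ with $T$ non-abelian and $\Alt(k)$ involved in $\Aut(T)$ is left open; this is precisely the point where Babai needed Schreier's Hypothesis, so your O'Nan--Scott analysis reconstructs the very obstruction the lemma is supposed to circumvent CFSG-free. Moreover the patch you propose there fails quantitatively: the bound $|\Aut(T)|\le|T|^{\log_2|T|}$ only helps if $\log|T|$ is polylogarithmic in $n$, but a composition factor $T$ of a primitive group of degree $n$ can have order comparable to $n!$, so $\log|T|$ can be of size about $n\log n$ and no contradiction with $k>(\log n)^5$ results. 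Thus the ``one genuinely hard case'' is a genuine gap, not a technicality.

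The missing idea in the paper's proof is to apply \fref{thm:7} not to $G_x$ or to the socle of $G$, but to a well-chosen transitive subgroup of $G$ containing $N$: since $N$ is transitive, \emph{every} subgroup of $G$ containing $N$ is transitive of degree $n$. Because the sum of the first $m$ primes is $O(m^2\log m)$ (\fref{prop:weak-ONan-Scott}), $\Alt(k)$ contains a cyclic subgroup $C$ (a product of disjoint cycles of distinct odd prime lengths) whose order is a product of $m\ge c\sqrt{k/\log k}$ different primes. Let $H=\phi^{-1}(C)$. Then $H$ is transitive of degree $n$, and already $H/N\isom C$ contributes $m$ distinct abelian composition factors, so $\big|\Comp_A(H)\big|\ge c\sqrt{k/\log k}$; on the other hand \fref{prop:3} together with \fref{thm:7} gives $\big|\Comp_A(H)\big|<2(\log n)^2+\log n$. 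Since $k>(\log n)^5$, these are incompatible for $k$ large, with no case analysis of $\Soc(G)$, no O'Nan--Scott, and no information about $\Out(T)$ needed. In short: you applied the polylogarithmic bound only to $G_x$ and then tried to extract a contradiction structurally; the paper instead manufactures a transitive group of degree $n$ with many abelian composition factors and lets \fref{thm:7} do all the work.
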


\begin{proof}

Assume that $N=\ker(\varphi)$ is non-trivial. Then $N$ is transitive
and $G/N\cong\Alt(k)$ contains a cyclic subgroup $C$ whose order is a
product of $c\sqrt{k/\log k}$ different primes (for some absolute
constant $c>0$).
This follows easily using \fref{prop:weak-ONan-Scott}.

  The extension $H$ of $N$ by $C$ is transitive (since $N$ itself is
  transitive) and we have $\big|\Comp_A (H)\big| \ge c\sqrt{k/\log k}$,
  which contradicts \fref{thm:7} for $k$ large enough.
\end{proof}

\begin{rem}
  Babai \cite{Ba1}
  proves the same statement for $k\ge\max\{8,2+\log n\}$
  using Schreier's Hypothesis.
\end{rem}

\begin{rem}
  In January 2017 Helfgott has found an error in the
  ``Split-or-Johnson'' section of Babai's paper \cite{Ba1}.
  This has been corrected in a week by Babai.
  See \cite{He} and \cite{Ba3} for the description of the correction.
  This does not effect the use of the present paper in the analysis of
  Babai's algorithm.
\end{rem}

\vskip 5pt\noindent
{\Large\bf Acknowledgments}
\vskip 5pt\noindent
We thank Laci Babai for finding a gap in the first version of this
proof and his enthusiasm in closing it.
We also thank Colva Roney-Dougal whose thesis 
provided a crucial hint, as to how to close the gap,
and Peter Cameron for drawing our attention to her thesis. The gap was actually closed during a pleasant workshop at the ESI. 
Finally we thank Bandi Szab\'o for his help in writing this paper and
for an essential simplification of the proof.

\vskip 16pt
MTA Alfr\'ed R\'enyi Institute of Mathematics,
Re\'altanoda u. 13--15, H--1053, Budapest, Hungary
\vskip 4pt
{\em E-mail address}: pyber.laszlo@renyi.mta.hu

\end{document}